\newcommand{\R}{{\mathbb R}}
\newcommand{\p}{{\mathbb P}}
\newcommand{\Crk}{\dim C^r_k(\Delta)}
\newcommand{\Cr}{{\mathcal C}^r(\Delta)}
\newcommand{\coker}{\mathop{\rm coker}\nolimits}
\newtheorem{defn0}{Definition}[section]
\newtheorem{prop0}[defn0]{Proposition}
\newtheorem{conj0}[defn0]{Conjecture}
\newtheorem{thm0}[defn0]{Theorem}
\newtheorem{lem0}[defn0]{Lemma}
\newtheorem{corollary0}[defn0]{Corollary}
\newtheorem{example0}[defn0]{Example}
\newtheorem{question0}[defn0]{Question}
\newenvironment{defn}{\begin{defn0}}{\end{defn0}}
\newenvironment{thm}{\begin{thm0}}{\end{thm0}}
\newenvironment{lem}{\begin{lem0}}{\end{lem0}}
\numberwithin{equation}{section}
\begin{document}

\title[A new bound for smooth spline spaces]%
{A new bound for smooth spline spaces}

\author{Hal Schenck}
\thanks{Schenck supported by NSF 1818646}
\address{Schenck: Mathematics
  Department \\ Auburn University\\
  Auburn \\ AL 36849\\ USA}
\email{hks0015@auburn.edu}

\author{Mike Stillman}
\thanks{Stillman and Yuan supported by NSF 1502294}
\address{Stillman: Mathematics Department \\ Cornell University \\
  Ithaca \\ NY 14850\\ USA}
\email{mike@math.cornell.edu}

\author{Beihui Yuan}
\address{Yuan: Mathematics Department \\ Cornell University \\
  Ithaca \\ NY 14850\\ USA}
\email{by238@math.cornell.edu} 

\subjclass[2000]{Primary 41A15, Secondary 13D40, 52C99} \keywords{spline, dimension formula, cohomology}

\begin{abstract}
\noindent 
For a planar simplicial complex $\Delta \subseteq \mathbb{R}^2$,
Schumaker proves in \cite{schu} that a lower bound on the dimension of the space 
$C^r_k(\Delta)$ of planar splines of smoothness $r$ and degree $k$ on $\Delta$ is given by a
polynomial $P_\Delta(r,k)$, and Alfeld-Schumaker show in \cite{as1} that 
$P_\Delta(r,k)$ gives the correct dimension when $k \ge 4r+1$.
Examples due to Morgan-Scott, Tohaneanu, and Yuan 
show that the equality $\Crk =P_\Delta(r,k)$ can fail for $k \in \{2r, 2r+1\}$. In
this note we
prove that the equality $\Crk= P_\Delta(r,k)$ cannot hold in general for $k \le \frac{22r+7}{10}$.
\end{abstract}
\maketitle


\section{Introduction}\label{sec:intro}
Let $\Delta$ be a triangulation of a simply connected 
polygonal domain in $\mathbb{R}^2$ having $f_1$ interior edges and
$f_0$ interior vertices. A landmark result in approximation theory is
the 1979 paper of Schumaker \cite{schu}, showing that for any 
triangulation $\Delta$, any smoothness $r$ and any degree $k$, the 
dimension of the vector space $C^r_k(\Delta)$ of splines of smoothness
$r$ and degree at most $k$ is bounded below by 
\begin{equation}
P_\Delta(r,k) =  {k+2 \choose 2} + {k-r+1 \choose 2}f_1 - \left( {k+2 \choose 2} - {r+2 \choose 2}\right)f_0 + \sigma,
\end{equation}
where $\sigma = \sum\sigma_i, \sigma_i = \sum_j
\max\{(r+1+j(1-n(v_i))), 0 \},\mbox{ and } n(v_i)$ is the number of
distinct slopes at an interior vertex  $v_i.$ In \cite{as1},
Alfeld-Schumaker prove for $k\ge 4r+1$, $\Crk =P_\Delta(r,k)$,
Hong \cite{hong} shows equality holds for $k \ge 3r+2$, and
\cite{as1} shows equality for $k \ge 3r+1$ and generic $\Delta$.

When the degree $k$ is small compared to the order of smoothness,
formula $(1.1)$ can fail to give the correct value for $\Crk$: a 1975 example of Morgan-Scott shows it
fails for $(r,k)=(1,2)$. In \cite{ss} it was conjectured that
$\Crk=P_\Delta(r,k)$ for $k \ge 2r+1$, but a recent example \cite{sy} shows that
equality fails for $(r,k)=(2,5)$. In 1974, Strang \cite{strang} conjectured that for
$(r,k)=(1,3)$ the formula holds for a generic triangulation. 

In \cite{b}, Billera used algebraic methods to prove Strang's
conjecture, winning the Fulkerson prize for his work.  A number of
subsequent papers \cite{br}, \cite{br1}, \cite{ds}, \cite{r1},
\cite{r2}, \cite{ss1}, \cite{ss2}, \cite{st83}, \cite{y} use tools
from algebraic geometry to study
splines. The translation to algebraic geometry takes the set of splines of all polynomial
degrees $k$, and packages it as a vector bundle $\Cr$ on $\p^2$.
The discrepancy between $P_\Delta(r,k)$ and the actual dimension in degree $k$ is then captured by the dimension
$h^1(\Cr(k))$ of the first cohomology of $\Cr$. 

The examples above do not
preclude the possibility that $\Crk=P_\Delta(r,k)$ holds for every
triangulation $\Delta$ if $k \ge 2r+2$. Our main result shows this is impossible:
\begin{thm}There is no constant $c$ so that $\dim C^r_k(\Delta) =
  P_\Delta(r,k)$ for all $\Delta$ and all $k \ge 2r+c$. In particular, there exists a
  planar simplicial complex $\Delta$ for which 
\[
h^1(\Cr(k)) \ne 0 \mbox{ for all }k \le  \frac{22r+7}{10}.
\]
\end{thm}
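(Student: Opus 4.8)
The plan is to produce, for each sufficiently large $r$, an explicit triangulation $\Delta=\Delta_r$ of a simply connected polygon together with a very special choice of geometry, and to show that $h^1(\Cr(k))\neq 0$ for every $k\le\frac{22r+7}{10}$. By the results recalled above, $\dim C^r_k(\Delta)=P_\Delta(r,k)+h^1(\Cr(k))$ with $h^1\ge 0$ throughout this range, so a single such family already shows that no constant $c$ can work: given $c$, choose $r$ with $2r+c<\frac{22r+7}{10}$ (that is, $r>5c-\frac72$) and apply the family at that $r$.

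The first step is the homological bookkeeping, following Billera, Schenck and Stillman. One has $C^r(\Delta)=\ker\!\big(\bigoplus_{\sigma}R\to\bigoplus_{\tau}R/\langle\ell_\tau^{r+1}\rangle\big)$, where $\sigma$ ranges over the triangles, $\tau$ over the interior edges, and $\ell_\tau$ is a linear form vanishing on $\tau$; this sits inside the chain complex $\mathcal R/\mathcal J$ with terms $\bigoplus_\sigma R$, $\bigoplus_\tau R/\langle\ell_\tau^{r+1}\rangle$, $\bigoplus_v R/J(v)$, where $J(v)=\sum_{\tau\ni v}\langle\ell_\tau^{r+1}\rangle$ and $v$ runs over interior vertices. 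Sheafifying on $\p^2$ and chasing the long exact sequences, $h^1(\Cr(k))$ is fed by two sources: the failure of the local artinian algebras $R/J(v)$ to have the expected Hilbert function, and the first homology of $\mathcal R/\mathcal J$, which measures the obstruction to patching local syzygies of the forms $\ell_\tau^{r+1}$ into global ones. A short computation confines the first source to the range $k\le 2r+1$: the powers $\ell^{r+1}$ of linear forms through a fixed vertex lie on a rational normal curve of degree $r+1$, so no $r+2$ of them are linearly dependent, and two distinct ones already generate everything in each degree $k\ge 2r+1$. Hence reaching degree $\approx 2.2r$ is necessarily a global phenomenon, and the construction must use several interior vertices with coordinated edge directions.

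The second step is the construction of $\Delta_r$: take a configuration of nested cells of Morgan--Scott type with a number of interior vertices growing linearly in $r$, and choose the slopes of the interior edges so that a prescribed collection of the power forms $\ell_\tau^{r+1}$ satisfies one unexpected linear syzygy with polynomial coefficients. The combinatorics are arranged so that this single imposed degeneracy propagates through the gluing maps and produces a whole interval of degrees in which $H_1(\mathcal R/\mathcal J)$, hence $h^1(\Cr(k))$, is nonzero. The endpoints of that interval depend on the two free parameters of the construction (the size of the nested pattern and the order of vanishing imposed by the degeneracy), and optimizing over these parameters produces the threshold $\frac{22r+7}{10}$, with the rational constants $22$ and $7$ falling out of the optimization. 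Concretely one would: compute enough of the minimal free resolution of $J(\Delta_r)=\sum_\tau\langle\ell_\tau^{r+1}\rangle$ for the special geometry to locate the relevant syzygies; deduce nonvanishing of $h^1(\Cr(k))$ for $k$ in that interval, checking that the non-vanishing locus is indeed an interval reaching down well below $2r$ so that it contains all of $\{\,k\le\frac{22r+7}{10}\,\}$; and finally exhibit explicit real coordinates for $\Delta_r$ realizing the degeneracy with every cell nondegenerate, so that $\Delta_r$ really is a triangulation of a simply connected polygon.

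The main obstacle is the middle point of that last step: showing that the imposed degeneracy keeps $h^1$ nonzero across the entire range up to $\approx 2.2r$, not merely in some single degree near $2r$, and pinning down the sharp cutoff. Nonvanishing of $h^1$ in one carefully chosen degree is comparatively routine; the real work is controlling enough of the graded structure of the syzygy module of $J(\Delta_r)$ --- equivalently, enough of the free resolution of the non-generically positioned ideal generated by the $(r+1)$-st powers of the interior edge forms --- to guarantee that $h^1$ does not vanish prematurely, and then to carry through the optimization that yields the exact constant. A secondary, more combinatorial difficulty is verifying that so degenerate a configuration is still realizable by an honest simplicial complex over $\mathbb R$.
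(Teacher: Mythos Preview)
What you have written is a strategy outline, not a proof: no triangulation is actually exhibited, no syzygy is computed, no Hilbert function is evaluated, and you yourself flag the central step (``controlling enough of the graded structure of the syzygy module'') as an unresolved obstacle. The claim that the constants $22$ and $7$ ``fall out of the optimization'' is unsupported, because no optimization is carried out.

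More importantly, the strategy you propose is aimed in the wrong direction. You plan to build a \emph{family} $\Delta_r$ whose combinatorics grow with $r$ (``a number of interior vertices growing linearly in $r$''), and then optimize over two free parameters. The paper does something much simpler and more effective: it uses a \emph{single fixed} triangulation $\Delta$ (the Stillman--Yuan complex) with only three interior vertices and two totally interior edges, and this one $\Delta$ works for every $r$. By Theorem~\ref{presH1}, $H_1(\mathcal R/\mathcal J)$ is presented as $R^2(-r-1)$ modulo the syzygies at the three interior vertices; since each such vertex sees exactly three distinct slopes, the syzygy degrees on $\langle l_1^{r+1},l_2^{r+1},l_3^{r+1}\rangle$ are $\lfloor(r+1)/2\rfloor$ and $\lceil(r+1)/2\rceil$ (this is a known fact about powers of linear forms in two variables). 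Writing $r+1=4j$, the six resulting syzygy columns assemble into a $2\times 6$ matrix with a block shape forced by which edges touch the boundary; the key lemma is that the two entries coming from each boundary-adjacent vertex generate a \emph{complete intersection} (via linkage: an almost complete intersection is linked to a Gorenstein ideal, and homogeneous Gorenstein in two variables is a complete intersection). This lets one rewrite $H_1(\mathcal R/\mathcal J)$ as $\mathrm{coker}\big(R^2(-6j)\to R(-4j)/I_1\oplus R(-4j)/I_2\big)$ with each $I_i$ a complete intersection generated in degree $2j$. A direct Hilbert-function count then shows the target exceeds the source for all $k$ up to the larger root of $-k^2+(12j-3)k-28j^2+18j-2$, namely $k\approx(6+2\sqrt{2})j-\tfrac32>\tfrac{22r+7}{10}$. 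The constants $22,7,10$ come from this quadratic, not from any optimization over families.

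So the gap is twofold: nothing is actually proved, and the envisioned mechanism (large growing configurations with imposed degeneracies) is not how the bound arises. The phenomenon is already visible in a three-vertex example, and the whole argument reduces to identifying a complete-intersection structure and reading off a Hilbert function.
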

\noindent This shows there exists a simplicial complex $\Delta$ such that 
$\dim C^r_k(\Delta) > P_\Delta(r,k)$ for all $k \le
\frac{22r+7}{10}$. For formula (1.1) to yield the
correct value for $\Crk$ for every triangulation $\Delta$, we must
have 
\begin{center}$k>\frac{22r+7}{10} >2.2r$. \end{center}
\section{Algebraic preliminaries}
Billera's construction in \cite{b} computes the $C^1$ splines as the
top homology module of a certain chain complex. An introduction to
homology and chain complexes aimed at a general audience appears in \cite{AGAT}, so the presentation below is terse. 
The paper \cite{ss1} introduces a modification of Billera's
construction, allowing a precise splitting of the contributions
to $\Crk$ into parts depending, respectively, on local and global geometry. 
\begin{defn}\label{compC} For a planar simplicial complex $\Delta$,
  let $\Delta_i$ be the set of interior faces of dimension $i$ $($all
  triangles are considered interior$)$. For $\tau \in \Delta_1$, let
  $l_\tau$ be a linear form vanishing on $\tau$, and for $v \in
  \Delta_0$, let $J(v)$ be the ideal generated by
  $l_\tau^{r+1}$, with $\tau$ ranging over all interior edges
  containing $v$. Construct a complex of $R=\R[x_1,x_2,x_3]$ modules
  as below, with differential $\partial_i$ the usual boundary operator in relative $($modulo $\partial(\Delta))$ homology.
\[
{\mathcal R}/{\mathcal J}: \mbox{    }0 \longrightarrow \bigoplus\limits_{\sigma \in \Delta_2} R
\stackrel{\partial_2}{\longrightarrow} \bigoplus\limits_{\tau \in \Delta_1} R/l_{\tau}^{r+1} 
\stackrel{\partial_{1}}{\longrightarrow} \bigoplus\limits_{v \in
  \Delta_0} R/J(v) \longrightarrow 0.
\]
\end{defn}
By construction, $H_2({\mathcal R}/{\mathcal J})$ is a graded
$R$-module, consisting of the set of splines of all degrees, and defines the sheaf
$\Cr$. It is easy to show that 
\[
H_0({\mathcal R}/{\mathcal J})=0 \mbox{ and } H_1({\mathcal R}/{\mathcal J}) = \bigoplus\limits_{k \ge 0}H^1(\Cr(k)).
\]
In particular, $\Crk = P_\Delta(r,k)+\dim_{\R}H^1(\Cr(k))$. Recall
that a {\em syzygy} on an ideal $\langle f_1,\ldots, f_k\rangle$ is a
polynomial relation on the $f_i$. For an interior vertex $v$, $J(v) = \langle
l_{\tau_1}^{r+1},\ldots, l_{\tau_n}^{r+1}\rangle$, so a syzygy on
$J(v)$ is of the form $\sum_{i=1}^n s_i \cdot l_{\tau_i}^{r+1} =0.$  A
main result of \cite{ss1} is 
\begin{thm}\label{presH1}
The module $H_1({\mathcal R}/{\mathcal J})$ is given by generators and
relations as 
\[
H_1({\mathcal R}/{\mathcal J}) \simeq \Big(\bigoplus\limits_{\tau \in \Delta_1^o} R(-r-1)\Big)/S, \mbox{ where }
\]
\begin{itemize}
\item The set $\Delta_1^o$ consists of totally interior edges $\tau$:
  neither vertex of $\tau$ is in $\partial(\Delta)$.
\item $S = \bigoplus\limits_{v\in \Delta_0} Syz(v)$: the direct sum
  of the syzygies on $J(v)$ at each interior vertex. 
\end{itemize}
\end{thm}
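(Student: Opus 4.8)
\emph{Proof strategy.} The plan is to identify $H_1(\mathcal{R}/\mathcal{J})$ with a cokernel by comparing the complex of \defref{compC} with the honest relative simplicial chain complex of $\Delta$. Write $\mathcal{R}$ for the complex $0\to\bigoplus_{\sigma\in\Delta_2}R\to\bigoplus_{\tau\in\Delta_1}R\to\bigoplus_{v\in\Delta_0}R\to 0$ obtained by tensoring the chain complex of $(\Delta,\partial\Delta)$ over $\mathbb{R}$ with $R$, and let $\mathcal{J}$ be the subcomplex $0\to 0\to\bigoplus_{\tau\in\Delta_1}\langle l_\tau^{r+1}\rangle\to\bigoplus_{v\in\Delta_0}J(v)\to 0$ concentrated in homological degrees $1$ and $0$; the relative boundary carries $\langle l_\tau^{r+1}\rangle$ into $\bigoplus_{v\in\Delta_0}J(v)$ because $\tau\ni v$ forces $l_\tau^{r+1}\in J(v)$. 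Reducing the differentials of $\mathcal{R}$ modulo $\mathcal{J}$ gives back the complex $\mathcal{R}/\mathcal{J}$, so we have a short exact sequence of complexes $0\to\mathcal{J}\to\mathcal{R}\to\mathcal{R}/\mathcal{J}\to 0$.

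First I would feed this into the long exact sequence in homology. Because $\Delta$ triangulates a simply connected polygonal domain it is a disk, so $H_0(\Delta,\partial\Delta;\mathbb{R})=H_1(\Delta,\partial\Delta;\mathbb{R})=0$ and $H_2(\Delta,\partial\Delta;\mathbb{R})=\mathbb{R}$; as $R$ is $\mathbb{R}$-flat this yields $H_0(\mathcal{R})=H_1(\mathcal{R})=0$ and $H_2(\mathcal{R})=R$. Since $\mathcal{J}$ lives only in degrees $0$ and $1$, the long exact sequence collapses to $H_1(\mathcal{R}/\mathcal{J})\cong H_0(\mathcal{J})=\operatorname{coker}\big(\partial_1:\bigoplus_{\tau\in\Delta_1}\langle l_\tau^{r+1}\rangle\to\bigoplus_{v\in\Delta_0}J(v)\big)$, and en route it also delivers $H_0(\mathcal{R}/\mathcal{J})=0$ together with $0\to R\to H_2(\mathcal{R}/\mathcal{J})\to H_1(\mathcal{J})\to 0$, which are the two ``easy to show'' facts recorded just before the theorem.

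Next I would compute that cokernel by presenting the vertex ideals. For $v\in\Delta_0$ the map $\bigoplus_{\tau\ni v}R(-r-1)\to J(v)$ sending the $\tau$-th generator $e_{v,\tau}$ to $l_\tau^{r+1}$ is onto with kernel $Syz(v)$, so $\bigoplus_{v\in\Delta_0}J(v)$ is the cokernel of $\bigoplus_{v\in\Delta_0}Syz(v)\hookrightarrow G:=\bigoplus_{v\in\Delta_0}\bigoplus_{\tau\ni v}R(-r-1)$, and $\partial_1$ lifts to the incidence map $\psi:\bigoplus_{\tau\in\Delta_1}R(-r-1)\to G$ taking the $\tau$-th generator to $\sum_{v\in\tau\cap\Delta_0}\pm e_{v,\tau}$. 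Hence $H_1(\mathcal{R}/\mathcal{J})=G/(\operatorname{im}\psi+\bigoplus_{v\in\Delta_0}Syz(v))$, and computing $G/\operatorname{im}\psi$ is an edge-by-edge tally: a totally interior edge $\tau\in\Delta_1^o$ produces the relation $e_{v_1,\tau}=e_{v_2,\tau}$ merging its two basis vectors into a single $R(-r-1)$, an interior edge with exactly one interior endpoint $v$ produces $e_{v,\tau}=0$, and an interior edge with both endpoints on $\partial\Delta$ never occurs in $G$. Thus $G/\operatorname{im}\psi\cong\bigoplus_{\tau\in\Delta_1^o}R(-r-1)$, and dividing further by the image of $\bigoplus_{v\in\Delta_0}Syz(v)$ under the projection $G\twoheadrightarrow\bigoplus_{\tau\in\Delta_1^o}R(-r-1)$ produces the asserted presentation, with $S$ identified as $\bigoplus_{v\in\Delta_0}Syz(v)$ viewed inside $\bigoplus_{\tau\in\Delta_1^o}R(-r-1)$ through that projection.

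The long exact sequence part is formal, and the entire argument really rests on the acyclicity $H_0(\mathcal{R})=H_1(\mathcal{R})=0$, which is where simple connectivity of $\Delta$ enters. The step I expect to need the most care is the final bookkeeping: the orientation signs in $\psi$ are harmless (flipping $e_{v,\tau}\mapsto-e_{v,\tau}$ is an automorphism of $G$), but one must track precisely how each $Syz(v)$ lands in $\bigoplus_{\tau\in\Delta_1^o}R(-r-1)$ — the coordinates of a syzygy at $v$ indexed by edges joining $v$ to a boundary vertex are simply forgotten — and confirm that pushing $\bigoplus_{v\in\Delta_0}Syz(v)$ through $G\twoheadrightarrow\bigoplus_{\tau\in\Delta_1^o}R(-r-1)$ introduces no relations beyond those already coming from $\operatorname{im}\psi$.
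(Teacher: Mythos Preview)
Your argument is correct and is essentially the one given in the original source: the paper does not prove \thmref{presH1} but quotes it from \cite{ss1}, where the proof proceeds exactly via the short exact sequence $0\to\mathcal{J}\to\mathcal{R}\to\mathcal{R}/\mathcal{J}\to 0$ and the resulting long exact sequence, using $H_0(\mathcal{R})=H_1(\mathcal{R})=0$ from simple connectivity to obtain $H_1(\mathcal{R}/\mathcal{J})\cong H_0(\mathcal{J})$, and then presenting $H_0(\mathcal{J})$ by lifting through the free presentations of the $J(v)$. Your edge-by-edge computation of $G/\operatorname{im}\psi$ and the observation that syzygy components along edges meeting $\partial\Delta$ are discarded under the projection are precisely the bookkeeping carried out there; the concern you flag in the last paragraph is not an actual obstruction, since $G/(\operatorname{im}\psi+K)=(G/\operatorname{im}\psi)/\overline{K}$ is a formal identity and no extra relations can appear.
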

\noindent Hence $H_1({\mathcal R}/{\mathcal J})$ is the quotient of a free
module with a generator for each totally interior edge $\tau$ by vectors 
of polynomials of the form $(s_1,\ldots,s_n)$. Note that if two
totally interior edges $\tau_1, \tau_2$ with the same slope meet at a vertex, then there
is a degree zero syzygy between them, and $S$ will have a column with nonzero constant entries.
\section{Proof of Theorem}
\noindent Following \cite{sy}, we consider the simplicial complex
$\Delta$ below.
\begin{center}
\includegraphics[trim = 45mm 90mm 45mm 90mm, clip,width=55mm,height=55mm]{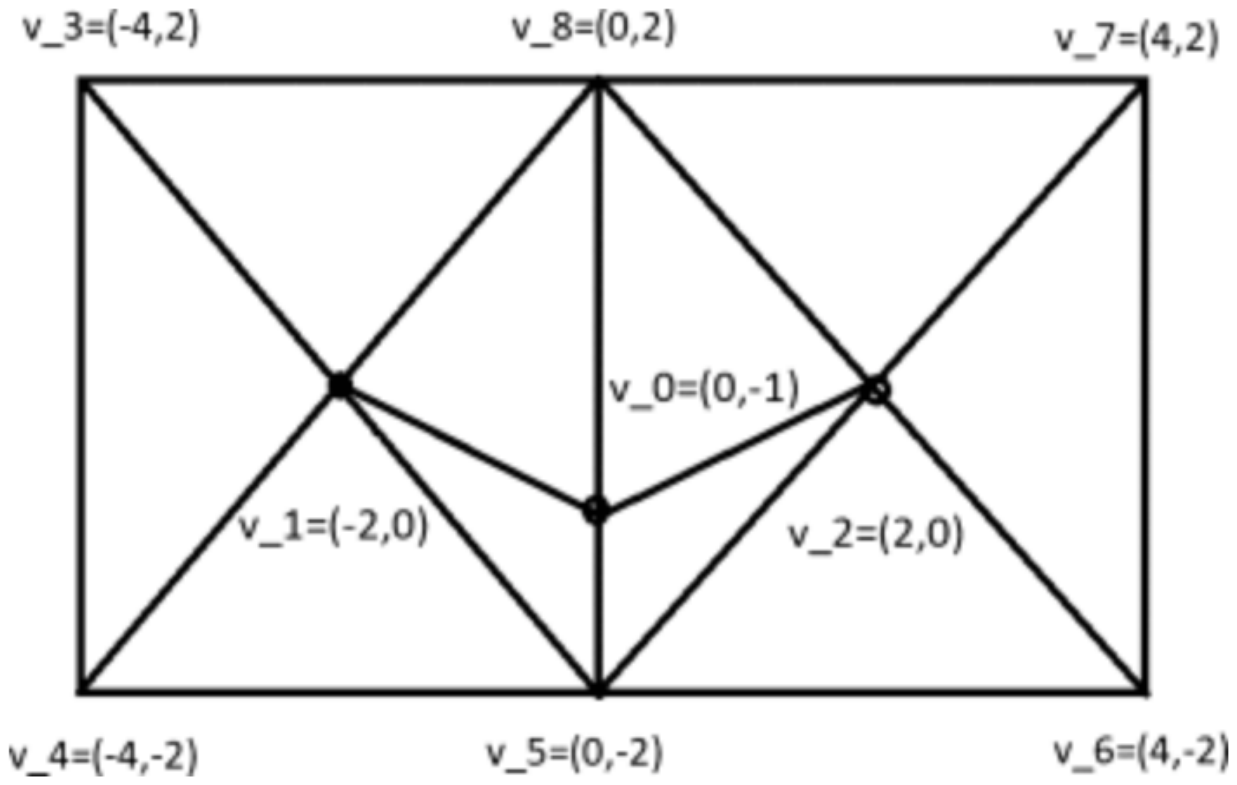}
\end{center}
By Theorem~\ref{presH1}, the discrepancy module $H_1({\mathcal R}/{\mathcal J})$ has two generators.
There are three interior vertices, and we need to quotient by the
syzygies at each vertex. Note that each vertex has only three edges
with distinct slopes attached, hence we must compute the syzygies on
ideals of the form 
\[
\langle l_1^{r+1},l_2^{r+1},l_3^{r+1} \rangle
\]
The key is that this is a local question, so after translating a
vertex so it lies at the origin, we have an ideal in two variables
(recall that because we homogenized the problem, our points now lie
in $\p^2$, so the linear forms defining edges are homogeneous in three
variables). The paper \cite{gs} gives a precise description of the
syzygies on any ideal generated by powers of linear forms in two
variables. In the case of three forms as above there are only two syzygies, in
degrees
\[
\Big\lfloor \frac{r+1}{2} \Big\rfloor \mbox{ and } \Big\lceil \frac{r+1}{2} \Big\rceil
\]
Specializing to the case where $r+1 = 4j$, we see that there are two
syzygies, both of degree $2j$. Next, we note that two of the three
vertices are connected to one totally interior edge and two edges
which touch the boundary, so writing the six relations (two syzygies
on each of the three interior vertices) as a matrix, we see that 
$H_1({\mathcal R}/{\mathcal J}) \simeq R^2(-r-1)/S$, where
\[
S = \;\;{\small \left[ \begin{array}{*{6}c}
s_{11} & s_{12} & s_{13} & s_{14} & 0 & 0 \\
0 & 0& s_{23} & s_{24} & s_{25} & s_{26}
\end{array} \right]}
\]
As noted above, the rows correspond to the generators for
$H_1({\mathcal R}/{\mathcal J})$:  the first row corresponds to the
totally interior edge $\overline{v_0v_1}$ and the second row to the
totally interior edge $\overline{v_0v_2}$; let $l_{ij}$ denote a
nonzero linear form vanishing on $\overline{v_iv_j}$.  

The first two
columns of $S$ correspond to the two syzygies at vertex $v_1$, the
second two columns to the syzygies at vertex $v_0$, and the last two
columns to the syzygies at vertex $v_2$. Since the syzygies
at $v_0$ are on the ideal
\[
\langle l_{01}^{r+1}, l_{02}^{r+1},l_{08}^{r+1}\rangle, 
\]
the third and fourth columns of $S$ have no zero
entries, because the syzygies involve both generating edges
$\overline{v_0v_1}, \overline{v_0v_2}$. In contrast, the syzygies at
$v_1$ are on the ideal
\[
\langle l_{01}^{r+1},l_{13}^{r+1},l_{14}^{r+1}\rangle.
\]
Hence in the matrix $S$, only the component of the syzygy involving $l_{01}^{r+1}$ appears--there is no part of the syzygy involving $l_{02}^{r+1}$.
This also explains why the rightmost two columns of $S$ have nonzero entry only in the second row. For the next lemma, we need some concepts from commutative algebra.
\begin{defn}
An ideal $I = \langle f_1, \ldots, f_k\rangle \subseteq R$ with $k$
minimal generators is a {\em complete intersection} if each $f_i$ is not a zero divisor on $R/\langle f_1,
\ldots, f_{i-1}\rangle$. Equivalently, the map 
\[
R/\langle f_1,\ldots, f_{i-1}\rangle \stackrel{\cdot f_i}{\longrightarrow} R/\langle f_1,\ldots, f_{i-1}\rangle
\]
is an inclusion. 
\end{defn}

From a geometric standpoint, being a complete intersection means that the locus
$V(f_1,\ldots,f_k)$ where the $f_j$ simultaneously vanish has
codimension equal to $k$. In particular, an ideal $I$ minimally generated
by $k$ elements is a compete intersection if it has codimension $k$,
and an {\em almost complete intersection} if it has codimension
$k-1$. 
 
 \begin{defn} Let $I, J$ be ideals in a ring $R$. Then the {\em colon
     ideal} 
   \[
     I:J = \{f \in R \mid f\cdot j \in I \mbox{ for all } j \in J\}
   \]
 \end{defn}
There is a nice connection of colon ideals to syzygies: if $I=\langle
f_1, \ldots, f_k\rangle$ and
\[
  \sum\limits_{i=1}^k a_if_i=0
\]
is a syzygy on $I$, then $a_k \in \langle f_1, \ldots, f_{k-1}\rangle
: \langle f_k \rangle$. We shall make use of this in the next lemma.

\pagebreak

\begin{lem}
The ideals $I_1 = \langle s_{11}, s_{12}\rangle$ and $I_2 = \langle
s_{25}, s_{26}\rangle$ are complete intersections.
\end{lem}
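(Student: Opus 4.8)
\noindent\emph{Proof proposal.} The plan is to realize $I_1$ as a colon ideal, identify its generators with the top row of a Hilbert--Burch matrix, and then rule out a common factor using the hypothesis of three distinct slopes.

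First I would localize at the vertex $v_1$. After translating $v_1$ to the origin, every edge at $v_1$ is cut out by a linear form in the two--dimensional space $W \subseteq R_1$ of forms vanishing at $v_1$, so the syzygy computation takes place inside the subring $R' := \R[W]$, a polynomial ring on two variables, where the relevant ideal is $I(v_1) = \langle l_{01}^{r+1}, l_{13}^{r+1}, l_{14}^{r+1}\rangle$. Since $l_{01}, l_{13}, l_{14}$ are pairwise non--proportional (three distinct slopes at $v_1$), $I(v_1)$ has codimension $2$ and, by \cite{gs}, its only syzygies are the two of degree $\lceil (r+1)/2\rceil \ge 1$; in particular the three forms are minimal generators, so $I(v_1)$ is an almost complete intersection and, by Hilbert--Burch, its syzygy module is free of rank $2$. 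Writing a minimal generating pair of syzygies as the columns of a $3\times 2$ matrix $M$, with the generators ordered as $(l_{01}^{r+1}, l_{13}^{r+1}, l_{14}^{r+1})$, the top row of $M$ is $(s_{11}, s_{12})$. By the colon--ideal remark preceding the lemma, the top entry of every syzygy on $I(v_1)$ lies in $\langle l_{13}^{r+1}, l_{14}^{r+1}\rangle : l_{01}^{r+1}$; conversely any $a$ in that colon ideal arises as the top entry of a syzygy read off from a relation $a\, l_{01}^{r+1} \in \langle l_{13}^{r+1}, l_{14}^{r+1}\rangle$. Since the $s_{1i}$ generate the image of the top--row projection of the (rank--two) syzygy module, this gives $I_1 = \langle s_{11}, s_{12}\rangle = \langle l_{13}^{r+1}, l_{14}^{r+1}\rangle : l_{01}^{r+1}$.

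The heart of the argument is that $s_{11}$ and $s_{12}$ have no nonconstant common factor in $R'$. By Hilbert--Burch the three maximal minors of $M$ recover $l_{01}^{r+1}, l_{13}^{r+1}, l_{14}^{r+1}$ up to units; the two minors that use the top row exhibit $l_{13}^{r+1}$ and $l_{14}^{r+1}$ as $R'$--combinations of $s_{11}$ and $s_{12}$. Hence any common factor of $s_{11}$ and $s_{12}$ divides both $l_{13}^{r+1}$ and $l_{14}^{r+1}$, which forces $l_{13}$ and $l_{14}$ to be proportional --- contradicting the assumption of three distinct slopes at $v_1$. (Equivalently, by liaison: $I_1$ is linked through the complete intersection $\langle l_{13}^{r+1}, l_{14}^{r+1}\rangle$ to the codimension--two almost complete intersection $I(v_1)$, and the link of a codimension--two almost complete intersection is Gorenstein of codimension two, hence a complete intersection.) Having no common factor, $s_{11}, s_{12}$ form a regular sequence in the two--dimensional polynomial ring $R'$, and since a regular sequence stays regular under the flat extension $R' \hookrightarrow R = R'[\,\text{third variable}\,]$, the ideal $I_1 \subseteq R$ is generated by a regular sequence, i.e.\ a complete intersection. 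Running the identical argument at $v_2$, where the two boundary edges again have distinct slopes, shows $I_2 = \langle s_{25}, s_{26}\rangle$ is a complete intersection.

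I expect the main obstacle to be the bookkeeping in the first step: confirming that the entries $s_{11}, s_{12}$ really are the $l_{01}^{r+1}$--coefficients of a \emph{minimal} generating pair of syzygies and that they generate (rather than merely lie in) the colon ideal, so that the Hilbert--Burch matrix is available with $(s_{11},s_{12})$ as its top row. Once the identification $I_1 = \langle l_{13}^{r+1}, l_{14}^{r+1}\rangle : l_{01}^{r+1}$ is pinned down, the commutative algebra --- the common--factor argument and the transfer of the complete intersection property from $R'$ to $R$ --- is routine.
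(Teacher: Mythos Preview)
Your proof is correct. The paper's argument is precisely your parenthetical liaison remark: identify $I_1 = \langle l_{13}^{r+1}, l_{14}^{r+1}\rangle : l_{01}^{r+1}$ as the link of the almost complete intersection $I(v_1)$, invoke Buchsbaum--Eisenbud \cite{be} to conclude the link is Gorenstein, and observe that a homogeneous Gorenstein ideal in two variables is a complete intersection. Your primary argument takes a more elementary route once the colon identification is made: by Hilbert--Burch the $2\times 2$ minors of the syzygy matrix that use the top row return $l_{13}^{r+1}$ and $l_{14}^{r+1}$ up to units, so any common factor of $s_{11},s_{12}$ would divide both, forcing $l_{13}$ and $l_{14}$ proportional. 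This trades a structural citation for a direct computation with the syzygy matrix and makes the three--distinct--slopes hypothesis visibly the operative one; the paper's version is terser but asks the reader to supply the Gorenstein--in--two--variables fact. Your bookkeeping worry is harmless: in two variables the first syzygy module of $I(v_1)$ is free of rank two, so the two syzygies of \cite{gs} form a minimal basis and their $l_{01}^{r+1}$--coefficients do generate the colon ideal.
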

\begin{proof} An ideal with two generators $f,g$ is a complete
  intersection when $f$ and $g$ are relatively prime, or
  equivalently when the unique minimal syzygy on $f,g$ is given by
  $f\cdot g - g\cdot f = 0$. The ideal $\langle l_1^{r+1},l_2^{r+1},l_3^{r+1}
\rangle$ is an almost complete intersection, which 
means that two generators, say $\{ l_1^{r+1},l_2^{r+1} \}$ are a
complete intersection. Proposition 5.2 in \cite{be} proves an almost complete
intersection is directly linked to a Gorenstein ideal. In this case
the linked ideal is $\langle  l_1^{r+1},l_2^{r+1} \rangle : l_3^{r+1} =
\langle s_{11}, s_{12}\rangle$. A homogeneous Gorenstein ideal in two variables is a complete
intersection, so the result follows.
\end{proof}
\noindent We're now ready to put the pieces together. Define 
\[
\phi = \;\;{\small \left[ \begin{array}{*{2}c}
s_{13} & s_{14}  \\
s_{23} & s_{24} 
\end{array} \right].}
\]
Then $H_1({\mathcal R}/{\mathcal J})$ may be presented as the cokernel
of the map
\[
R^2(-6j) \stackrel{\phi}{\longrightarrow} R(-4j)/I_1 \bigoplus R(-4j)/I_2
\] 
The Hilbert function of a graded module $M$ takes as input an integer
$t$, and gives as output the dimension of the vector space
$M_t$. Since $I_i$ is a complete intersection with two generators in degree $2j$, there are exact sequences:
\[
0 \longrightarrow R(-4j) \longrightarrow R(-2j)^2 \longrightarrow R \longrightarrow R/I_i \longrightarrow 0.
\]
Tensoring this exact sequence with $R(-4j)$ yields a sequence whose rightmost term is a direct summand of the target of the map $\phi$.
When $k \ge 2r+2 = 8j$ (so that all the modules in the exact sequence above contribute), taking the Euler characteristic of the sequence 
and using that $HF(R(-i),k)={k-i+2 \choose 2}$ yields
\[
\begin{array}{ccc}
HF(R^2(-6j),k) &=& (k-6j+2)(k-6j+1)\\
HF(R(-4j)/I_1 \bigoplus R(-4j)/I_2, k) & = &
                                             (k-4j+2)(k-4j+1)\\
                                           && -2(k-6j+2)(k-6j+1)\\
                                           &&+(k-8j+2)(k-8j+1)
\end{array}
\]
Therefore the Hilbert function of the target of $\phi$ minus
the Hilbert function of the source of $\phi$ is 
\[
-k^2 + (12j-3)k -28j^2+18j-2, 
\]
which has two real roots, the larger at 
\[
k= 6j-3/2 + \frac{\sqrt{32j^2+1}}{2} >(6+2\sqrt{2})j -3/2 > 8.8j-2.2+.7
= \frac{22r+7}{10}.
\]
We have been working with the assumption that $r+1 = 4j$; for $r \ge 7$ the condition $k \ge 8j$ holds and we've shown the cokernel of $\phi$ must be nonzero in degree  $\le \frac{22r+7}{10}.$ For $r=3$ and $j=1$ the condition that $k \ge 8$ fails--the larger root is at approximately $7.4$. In this case, a direct computation verifies that $\coker(\phi)$ is nonzero in degree $7$. The same line of argument works with a minor modification for $(r+1 \mod 4) \in \{1,2,3\}$, with no change in the bound, and concludes the proof. $\qed$
\pagebreak

\noindent{\bf Remarks and Open Questions}: 
The triangulation $\Delta$ appearing in $\S 3$ is the only known
triangulation for which $P_\Delta(r,2r+1) \ne \dim
C^r_{2r+1}(\Delta)$. For $r \le 70$, computations show that the maximal value for
which $H_1({\mathcal R}/{\mathcal J_\Delta}) \ne 0$ is $\Big\lfloor
\frac{9r+2}{4}\Big\rfloor =\Big\lfloor \frac{45r+10}{20}\Big\rfloor
\ge \Big\lfloor \frac{44r+14}{20}\Big\rfloor =\Big\lfloor
\frac{22r+7}{10} \Big\rfloor$. In particular the bound of Theorem 1.1
is quite close to optimal for $\Delta$. This raises two interesting
questions. 
\vskip .05in
\begin{enumerate}
\item Is it possible to
lower the value of $k$ such that $\Crk =P_\Delta(r,k)$ holds
for all $\Delta$? 
\item Is it possible to raise  the value of $k$ such that $\Crk > P_\Delta(r,k)$ holds for some $\Delta$? 
\end{enumerate}
%
\renewcommand{\baselinestretch}{1.0}
\small\normalsize 

\bibliographystyle{amsalpha}

\end{document}